%

\documentclass[num-refs]{wiley-networks}
\usepackage{adjustbox, hyperref} 
\usepackage{url}
\urlstyle{same}
\usepackage{graphicx}
\usepackage{amsmath,amssymb,amsfonts} 
\usepackage{times}
\usepackage{bbm}
\usepackage{enumitem}
\usepackage{lineno,hyperref}
\modulolinenumbers[5]
\usepackage{todonotes} 

\usepackage{courier}

\usepackage{color}

\usepackage{tikz}
\usetikzlibrary{calc,arrows,automata}
\usetikzlibrary{matrix,positioning,arrows,decorations.pathmorphing,shapes}
\usetikzlibrary{shapes,decorations}

\newtheorem{lem}{Lemma}

\newtheorem{teo}{Theorem}
\newtheorem{prop}{Proposition}

\newtheorem{cor}{Corollary}

\newcommand{\gr}[1]{{d_{#1}}}



\newcommand{\petersen}{
    \begin{tikzpicture}
      [scale=0.4,place/.style={circle,draw=black,thick,fill=black,
                 inner sep=0pt,minimum size=1mm}]

    \foreach \x in {0,1,2,3,4}{
      \node (\x) at ( 90+72*\x:1.2) [place] {};
      \node (\x') at (90+ 72*\x:2) [place] {};
    };

    \foreach \x in {0,1,2,3,4}
	  \draw (\x) to (\x');
    \foreach \x/\y in {0/2,1/3,2/4,3/0,4/1,0'/1',1'/2',2'/3',3'/4',4'/0'}
	  \draw (\x) to (\y);
	
    \end{tikzpicture}
}

\newcommand{\Kdt}{
\begin{tikzpicture}
     [ scale=0.6, nodo/.style={circle,draw=black!50,fill=black!100,
     inner sep=0pt,minimum size=1.5mm}]

\foreach \y in {1,2,3}   { 
\draw[thick] (-1, 0) to  (0,\y-2); 
\draw[thick] (1, 0) to  (0,\y-2); 
 };            
 \foreach \x in {1,2,3}     \node[nodo] (0\x) at (0,\x-2) {};
 \node[nodo] (1) at (-1,0) {};
 \node[nodo] (2) at (1,0) {};

 \end{tikzpicture} 
}

\newcommand{\Mobius}[2]{
\begin{tikzpicture}
     [ scale=0.6, nodo/.style={circle,draw=black!50,fill=black!100,
     inner sep=0pt,minimum size=3mm}]

\foreach \x in {0,...,#1}   { 
\draw[thick] (\x*#2:2) to  (\x*#2+#2:2); 
\draw[thick] (\x*#2:2) to  (\x*#2+180:2); };            
 \foreach \x in {0,...,#1}   {
  \node[nodo] (\x) at (\x*#2:2) {};}

 \end{tikzpicture} 
}

\newcommand{\cycle}[2]{
\begin{tikzpicture}
     [ scale=0.6, nodo/.style={circle,draw=black!50,fill=black!100,
     inner sep=0pt,minimum size=3mm}]

\foreach \x in {1,...,#1}   { 
\draw[thick] (\x*#2:2) to  (\x*#2+#2:2); };            
\draw[thick] (1*#2:2) to  (#1*#2+#2:2);
 \foreach \x in {1,...,#1}   {
  \node[nodo] (\x) at (\x*#2:2) {};}

 \end{tikzpicture} 
}

\newcommand{\Kunon}[2]{
\begin{tikzpicture}
     [ scale=0.6, nodo/.style={circle,draw=black!50,fill=black!100,
     inner sep=0pt,minimum size=3mm}]

  \node[nodo] (0) at (0,0) {};
\foreach \x in {1,...,#1}   { 
\draw[thick] (0) to (\x*#2:2); };            

 \foreach \x in {1,...,#1}   {
  \node[nodo] (\x) at (\x*#2:2) {};}

 \end{tikzpicture} 
}

\newcommand{\constelacion}{

\begin{tikzpicture}
     [ scale=0.55, nodo/.style={circle,draw=black!100,fill=black!100,
     inner sep=0pt,minimum size=1mm},
     nodor/.style={rectangle,draw=black!100,fill=black!100,
     inner sep=0pt,minimum size=1mm},
     nodot/.style={draw=black!100,fill=black!100,
     inner sep=0pt,minimum size=1mm}
     ]

\draw[step=1cm,gray,very thin] (1, 0) grid (20,30);

\draw[->] (1,0)--(21,0) node[right] {$n$};
\draw[->] (1,0)--(1,31)node[right] {$m$};;
\foreach \x in {1,...,20}
	\draw node[below] at (\x,0) {$\x$};
\foreach \y in {1,...,30}
	\draw node[left] at (1,\y) {$\y$};

\foreach \x in {1,...,8}{
\draw node at (\x+0.3, \x*\x*0.5-\x*0.5+0.3) {\tiny$K_{\x}$};
\draw node[nodor] at (\x, \x*\x*0.5-\x*0.5) {};
}
\foreach \x/\z in {4/2,5/2,6/3,7/3,8/4}
\foreach \y in {1,...,\z}
\draw node[nodor,green] at (\x, \x*\x*0.5-\x*0.5-\y) {};

\foreach \x in {1,...,20}{
}
\foreach \x/\y in {3/0.3333,	4/0.25,	5/0.2,	6/0.1666,	7/0.1428,	8/0.125,	9/0.1111,	10/0.1,	11/0.0909,	12/0.0833,	13/0.0769,	14/0.0714,	15/0.0666,	16/0.0625,	17/0.0588,	18/0.0555,	19/0.0526,	20/0.05}
\draw node[scale=0.15] at (\x, \x) {\cycle{\x}{360*\y}};
\foreach \x/\y in {2/.5, 3/0.3333,	4/0.25,	5/0.2,	6/0.1666,	7/0.1428,	8/0.125,	9/0.1111,	10/0.1,	11/0.0909,	12/0.0833,	13/0.0769,	14/0.0714,	15/0.0666,	16/0.0625,	17/0.0588,	18/0.0555,	19/0.0526}
\draw node[scale=0.15] at (\x+1, \x) {\Kunon{\x}{360*\y}};

\foreach \x in {5,...,19}
\draw node[scale=0.3] at (\x, \x+1) {\Kdt};
\foreach \x in {5,...,20}
\draw node[scale=0.15] at (\x, \x+2) {\Mobius{4}{90} };
\foreach \x in {6,...,20}
\draw node[scale=0.15] at (\x, \x+3) {\Mobius{6}{60}  };
\foreach \x/\y/\n in { 12/18/Y}{
\draw[left] node at (\x, \y) {\tiny$\n$};
\draw node[nodo] at (\x, \y) {};
}
\draw node[scale=0.35] at (10,15) {\petersen  };
\draw node[scale=0.15] at (8,12) {\Mobius{8}{45}  };
\foreach \r in {7,...,10}{
\draw[left] node[scale=0.8] at (2*\r, 3*\r) {$r_{\r}$};
\draw node[nodo] at (2*\r, 3*\r) {};
}
\foreach \x in {3,4}
   \draw[red] node at (2*\x, \x*2*\x-2*\x-1) {$\circ$};
\foreach \x in {3,4}
   \draw[red] node[red] at (2*\x+1, 2*\x*\x-3) {$\circ$};


\foreach \r in {6,7,8,9,10}{
\draw[thick] (2*\r, 3*\r)--(20,20+\r);
}
\foreach \r in {4}{
\draw[thick] (2*\r+0.3, 3*\r+0.3)--(20,20+\r);
}

 \end{tikzpicture}}

\newcommand{\induced}[1]{\left[#1\right]}

\papertype{Original Article}
\paperfield{Networks} 

\title{Finding uniformly most reliable graphs by counting trivial cuts}


\author[1\authfn{1}]{Eduardo Canale}
\author[2\authfn{2}]{Guillermo Rela}
\author[1\authfn{1}]{Franco Robledo}
\author[3\authfn{1}]{Pablo Romero}


\affil[1]{Laboratorio de Probabilidad y Estad\'istica, Facultad de Ingenier\'ia, Universidad de la Rep\'ublica.   Uruguay}
\affil[2]{Instituto de Ingeniería Mecánica y Producción Industrial, Universidad de la Rep\'ublica. Uruguay}
\affil[3]{Facultad de Ciencias Exactas y Naturales. Universidad de Buenos Aires. Argentina.}

\corraddress{Laboratorio de Probabilidad y Estad\'istica, Facultad de Ingenier\'ia, Universidad de la Rep\'ublica. Julio Herrera y Reissig 565, PC 11300. Montevideo, Uruguay}
\corremail{eduardo.canale@gmail.com}



\runningauthor{E. Canale et al.}

\begin{document}

\maketitle

\begin{abstract}
There is a vast literature focused on network reliability evaluation. In the last decades, 
reliability optimization has been also addressed. Frank Boesch in 1986 introduced the concept of uniformly most reliable graph (UMRG). Later, Boesch \emph{et al.} presented the first UMRGs and conjectured that some special subdivisions of the bipartite complete graph $K_{3,3}$, as well as the bipartite complete graph $K_{4,4}$, are UMRGs. Wang proved that the first conjecture is true. 
Wendy Myrvold confirmed that $K_{4,4}$ is also UMRG, by means of computational tests. However, thus far, there is no mathematical proof in the literature. A trivial cut is an edge-set that includes all the incident edges of a fixed node. In this article we describe a methodology to determine UMRGs based on bounding the number of trivial cuts. 
As a proof-of-concept it is proved that both $K_{3,3}$ and $K_{4,4}$ are UMRGs. 

\keywords{Graph Theory, Network Reliability, Uniformly Most Reliable Graphs.}
\end{abstract}

\section{Motivation}\label{sect:intro}
Frank Boesch, in a seminal work~\cite{1986-Boesch}, introduced the concept of \emph{uniformly most reliable graph}, or UMRG for short. He posed several conjectures, most  of them are still awaiting for a resolution~\cite{2021-Survey}. The interested reader can find a practical discussion in the recent survey~\cite{2021-Brown}. 
In a foundational work, Boesch \emph{et al.} provided the first nontrivial UMRGs~\cite{1991-Boesch}. The authors claimed without proof that the bipartite complete graph $K_{4,4}$ is UMRG. Wendy Myrvold~\cite{1990-Myrvold} using a computational analysis found a list of all the UMRGs with 8 nodes or fewer. The list included the graph $K_{4,4}$. Nevertheless, the author included a computer-assisted proof.\\

This article is organized in the following manner. Section~\ref{def} presents the concept of UMRG as well as graph-theoretic terminology. Section~\ref{related} presents some relevant results in the field.
The main contributions are given in Sections~\ref{methodology} and~\ref{contributions}. Section~\ref{methodology} provides a novel methodology to count trivial cuts. As a proof-of-concept, 
we prove that the graphs $K_{3,3}$ and $K_{4,4}$ are UMRGs in Section~\ref{contributions}.  Section~\ref{conclusions} presents concluding remarks and trends for future work.  

\section{Definitions and Terminology}\label{def}
Throughout the document, all graphs are assumed to be simple.  
Consider a graph $G=(V,E)$ whose nodes do not fail but its edges fail independently with identical probability $\rho$. 
The all-terminal reliability $R_G(\rho)$ is the probability that the 
resulting random graph remains connected. 
For convenience, we work with the unreliability $U_G(\rho)=1-R_G(\rho)$. 
Following the terminology from~\cite{Diestel}, we denote $n=|V| =|G|$ and $m=|E|=\|G\|$ the respective order and size of the graph $G$. 
A \emph{cutset} is an edge-set $C \subseteq E$ such that the resulting
graph $G-C$ is not connected. 
Denote $m_k(G)$ the number of all the cutsets with cardinality $k$. 
By the sum-rule, the unreliability polynomial can be expressed as follows:
\begin{equation}\label{poly}
U_G(\rho) = \sum_{k=0}^{m}m_k(G) \rho^{k}(1-\rho)^{m-k}.
\end{equation}

An $(n,m)$-graph is a graph on $n$ nodes and $m$ edges.   
Observe that, for each pair of positive integers $n$ and $m$ such that $n-1 \leq m \leq \binom{n}{2}$, the collection of all $(n,m)$-graphs  is finite. Therefore, if we consider a fixed $\rho \in [0,1]$ then there exists at at least one graph $G$ that achieves the minimum unreliability, i.e., $U_G(\rho) \leq U_H(\rho)$ for all $(n,m)$-graph $H$. Further, if the previous condition holds for all $\rho \in [0,1]$ and all $(n,m)$-graphs $H$, then $G$ is a UMRG.

The following graph-theoretic terminology will be considered. 
The \emph{edge connectivity} $\lambda(G)$ is the smallest $\lambda$ such that $m_{\lambda}>0$.  
A \emph{trivial cut} is a cutset that includes all the incident edges of a fixed node. 
The \emph{degree} $\gr{v}$ of a node $v$ in $V$ is the number of edges that are incident to $v$. A graph is \emph{regular} if all its nodes have identical degrees. The \emph{minimum degree} of a graph $G$ is denoted by $\delta(G)$. 
A graph is super-$\lambda$, or \emph{superconnected}, if it is $\lambda$-regular and further, it has only trivial cutsets: $m_{\lambda} = n$. 
In a connected graph $G$, a \emph{bridge} is a single edge $uv$ such that $G-uv$ is not connected. A \emph{cut-point} is a node $v$ such that $G-\{v\}$ has more connected components than $G$. A graph $G$ with more than two nodes is \emph{biconnected} if it is connected and it has no cut-points. A \emph{tree} is an acyclic connected graph and the number of spanning trees of $G$ is its \emph{tree-number}, denoted by $t(G)$.  
A \emph{matching} is a set of nonadjacent edges. A \emph{perfect matching} is a matching that is incident to all the nodes of a graph. 
The \emph{$n$-cycle} and the \emph{$n$-complete} graphs are denoted  $C_n$ and $K_n$, respectively. The graphs $C_3$ and $C_4$ are called the \emph{triangle} and the \emph{square} respectively. The girth of a graph $G$ is denoted by $g(G)$, and it is the number of vertices in the smallest cycle belonging to $G$. 
In the \emph{bipartite complete} graph $K_{n_1,n_2}$ 
the node-set $V$ is partitioned into two parts $A$ and $B$ such that  $|A|=n_1$, $|B|=n_2$, and 
the edge-set is precisely $A \times B$.  A \emph{multipartite complete} graph $K_{n_1,\ldots,n_r}$ is defined analogously, where the node-set is $V$ is partitioned into $r$ parts $V_1, \ldots, V_r$ such that 
all the nodes belonging to $V_i$ is joined to all the nodes belonging to $V_j$, for all the pairs $i$ and $j$ such that $j\neq i$.

\clearpage

\section{Related Work}\label{related}
If $k$ is an integer such that $k\in \{0,\ldots,m\}$, then 
an $(n,m)$-graph $G$ is \emph{min-$m_k$} if $m_k(G)\leq m_k(H)$ for all the $(n,m)$-graphs $H$. Furthermore, $G$ is \emph{stronger} than $H$ if $m_k(G) \leq m_k(H)$ for all $k\in \{0,\ldots,m\}$. A graph $G$ is the \emph{strongest} in its class if it is stronger than all the graphs in its class. From Expression~\eqref{poly}, a strongest graph is UMRG. This sufficient criterion for a graph to become UMRG is widely adopted in the literature. Necessary conditions are available as well:  

\begin{prop}[\cite{2014-Brown}]\label{calculus}\
Consider two graphs $G$ and $H$ on $n$ vertices and $m$ edges. 
Then, the following assertions hold.
\begin{itemize}
\item[(i)] If there exists $k \in \{0,\ldots,m\}$ such that $m_i(H)=m_i(G)$ for all $i<k$ but $m_k(H)< m_k(G)$, 
then there exists $\rho_0 > 0$ such that $U_H(\rho) < U_G(\rho)$ for all $\rho \in (0,\rho_0)$.
\item[(ii)] If there exists $k \in \{0,\ldots,m\}$ such that $m_i(H)=m_i(G)$ for all $i>k$ but $m_k(H)< m_k(G)$, 
then there exists $\rho_1 < 1$ such that $U_H(\rho) < U_G(\rho)$ for all $\rho \in (\rho_1,1)$.
\end{itemize}
\end{prop}

Harary~\cite{1962-Harary} constructed graphs that achieve the maximum edge connectivity $\lambda = \lfloor 2m/n\rfloor$ among the class of all $(n,m)$-graphs. By definition, $m_i(G)=0$ for all $i < \lambda$. 
Then, by  Proposition~\ref{calculus}, if $G$ is UMRG then the number of cutsets $m_{\lambda}(G)$ must be minimum among the class of all $(n,m)$-graphs. On the other hand, $m_i(G)= {m \choose i}$ for all $i>m-n+1$, since trees are minimally connected with $m=n-1$ edges. 
The number of spanning subgraphs with $m-n+1$ edges is precisely the tree-number $t(G)$, so $m_{m-n+1}(G)= {m \choose m-n+1}-t(G)$. 
By Proposition~\ref{calculus}, the maximization of the tree-number is a necessary condition for a graph to become UMRG in its class. 
Prior observations directly link this network design problem with distinguished graph invariants:

\begin{cor}[Necessary Criterion] \label{necessary}
If $G$ is a UMRG then it must have the maximum tree-number $t(G)$, the maximum edge connectivity $\lambda(G)$, and the minimum number of cutsets $m_{\lambda}(G)$, among all the $(n,m)$-graphs.
\end{cor}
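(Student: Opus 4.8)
The plan is to derive the three necessary conditions by contradiction, each time producing a hypothetical competitor $G$ whose unreliability beats that of $H$ on a subinterval of $(0,1)$ and then invoking Proposition~\ref{calculus}. The organizing idea is that only the \emph{extremal} coefficients of the unreliability polynomial need to be controlled: the lowest ones are pinned down by $\lambda$ and $m_\lambda$, and the highest ones by the tree-number $\tau$. Throughout, I would use that $H$ being UMRG means $U_H(\rho)\le U_G(\rho)$ for every $(n,e)$-graph $G$ and every $\rho$.

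First I would settle edge-connectivity. Assume $H$ is UMRG but $\lambda(H)$ is not maximum, so some $(n,e)$-graph $G$ has $\lambda(G)>\lambda(H)$. Then $m_i(G)=0=m_i(H)$ for all $i<\lambda(H)$, whereas $m_{\lambda(H)}(G)=0<m_{\lambda(H)}(H)$ since $m_{\lambda(H)}(H)>0$ by definition of $\lambda(H)$. Applying Proposition~\ref{calculus}-\emph{(i)} with $k=\lambda(H)$, and with $G$ and $H$ playing the roles of ``$H$'' and ``$G$'' there, yields $\rho_0>0$ with $U_G(\rho)<U_H(\rho)$ on $(0,\rho_0)$, contradicting UMRG-ness. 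Hence $\lambda(H)$ is maximum; write $\lambda$ for this value. Repeating the argument over the smaller family of $(n,e)$-graphs of connectivity $\lambda$, with a competitor $G$ satisfying $m_\lambda(G)<m_\lambda(H)$ and again $k=\lambda$, shows that $m_\lambda(H)$ must be the minimum among those graphs.

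Next I would treat the tree-number via the top of the coefficient sequence. Two elementary facts are needed. First, for every $i>e-n+1$ and every $(n,e)$-graph $G$, removing any $i$ edges leaves $e-i<n-1$ edges, too few to connect $n$ vertices, so every $i$-edge subset is a cutset and $m_i(G)={e \choose i}$, independently of $G$. Second, for $i=e-n+1$ the subgraph $G-C$ has exactly $n-1$ edges, and it is connected precisely when it is a spanning tree of $G$; hence the $i$-edge subsets that are \emph{not} cutsets are in bijection with spanning trees, giving $m_{e-n+1}(G)={e \choose e-n+1}-\tau(G)$. Now if $H$ were UMRG but some $(n,e)$-graph $G$ had $\tau(G)>\tau(H)$, the coefficient sequences of $G$ and $H$ would agree for all $i>e-n+1$ while $m_{e-n+1}(G)<m_{e-n+1}(H)$; Proposition~\ref{calculus}-\emph{(ii)} with $k=e-n+1$ would then give $\rho_1<1$ with $U_G(\rho)<U_H(\rho)$ on $(\rho_1,1)$, again a contradiction. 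Therefore $\tau(H)$ is maximum.

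I do not expect a genuine obstacle here: the corollary is essentially a bookkeeping consequence of Proposition~\ref{calculus}. The only places demanding mild care are orienting the inequalities correctly when swapping the two graphs into the hypotheses of Proposition~\ref{calculus}, and justifying the identity $m_{e-n+1}(G)={e \choose e-n+1}-\tau(G)$, which in turn rests only on the fact that a connected spanning subgraph with exactly $n-1$ edges is a spanning tree.
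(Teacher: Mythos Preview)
Your proposal is correct and follows essentially the same approach as the paper: the paper argues just before the corollary that $m_i(G)=0$ for $i<\lambda$ forces maximum $\lambda$ and minimum $m_\lambda$ via Proposition~\ref{calculus}\emph{(i)}, and that the identity $m_{e-n+1}(G)=\binom{e}{e-n+1}-\tau(G)$ together with $m_i(G)=\binom{e}{i}$ for $i>e-n+1$ forces $t$-optimality via Proposition~\ref{calculus}\emph{(ii)}. Your write-up simply makes the contradictions explicit and is slightly more careful about orienting the roles of $G$ and $H$ when invoking Proposition~\ref{calculus}, but there is no substantive difference.
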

For convenience we say that an $(n,m)$-graph $G$ is $t$-optimal if $t(G) \geq t(H)$ for every $(n,m)$ graph $G$. Briefly, Corollary~\ref{necessary} asserts that any UMRG 
must be $t$-optimal and max-$\lambda$ min-$m_{\lambda}$.

The following theorems will be useful for our purpose:
\begin{teo}[\cite{1981-Cheng}]\label{cheng}
All regular complete multipartite graphs are $t$-optimal.  
\end{teo}

\begin{teo}[\cite{1994-Wang}]\label{stronger}
If $H$ is any $(n,m)$-graph with $m\geq n$, then there exists some stronger $(n,m)$-graph $G$ that is biconnected. 
\end{teo}

A \emph{graph constellation} with an updated set of the UMRGs found thus far is presented in Figure~\ref{constelation}. The graphical representation has the corresponding graph for every pair $(n,m)$ of nodes and edges,  whenever a UMRG exists. The pairs where UMRGs do not exist are marked with red circles~\cite{2014-Brown,1991-Myrvold}. 

The family of sparse $(n,n-i)$ graphs are straight lines with unit slope. The reader can find trees, $n$-cycles, balanced $\theta$-graphs, and some specific subdivisions of $K_4$, see~\cite{1991-Boesch}. The green squares represent dense graphs. Observe that 3-regular graphs can be found in the straight line with slope $3/2$. These graphs include $K_4$, Wagner~\cite{2017-Romero}, Petersen~\cite{2018-Rela} and Yutsis~\cite{2019-Canale}. Ath and Sobel~\cite{2000-Ath} conjectured that special subdivisions of Wagner, Petersen, Yutsis, Heawood and Cantor-Mobius are UMRGs. 
It is still an open problem to determine even if Heawood and Cantor-M\"obius are UMRGs. Thus far, the only 4-regular graphs include $K_5$, $\overline{C_3\cup C_4}$, and $K_{4,4}$ (see Theorem~\ref{Wag}).

\begin{figure}
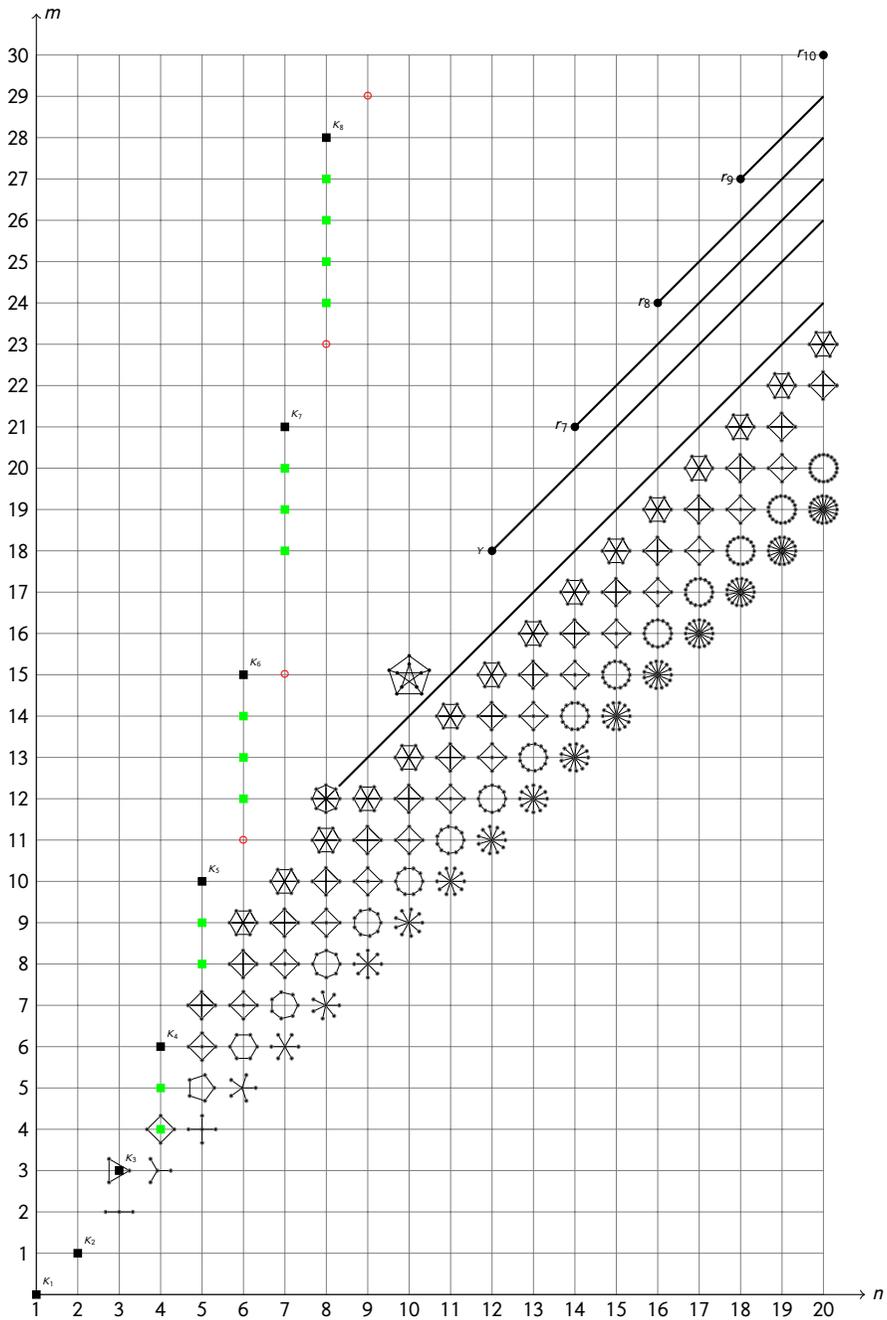
\centering{
\constelacion
}\caption{UMRGs as a function of $(n,m)$} 
	\label{constelation}
	
\end{figure} 

\clearpage

\section{Bounding Trivial Cuts}\label{methodology}
We give two upper-bounds for the coefficients $m_k(G)$ using trivial cuts. These bounds combine the inclusion-exclusion principle with elementary counting. Consider the following terminology: 
\begin{itemize}
\item For any node-set $A \subset VG$, we denote $[A]$ the subgraph induced by $A$ in $G$.
\item The cut induced by the set $A$ is $\partial A =  \{uv \in EG: u \in A, v \not\in A \}$.
\item For any set $A$, we denote $A^{(k)} = \{S \subset A: |S|=k\}$. 
\item The node-set composed by all nodes with degree $i$ is 
$V_i(G) =\{v\in VG: \gr{v}=i\}$. 
\item For any fixed node $v$, $\partial v$ denotes the set of all  edges incident to $v$,  that is, $\partial v = \partial \{v\} = \{ uv: uv \in E(G)\}$.
\item For any edge $e=uv$, $\partial e$ denotes the set of edges incident to $e$, that is, 
$\partial e = \partial \{u,v\} = \partial u \triangle \partial v$. 
\item Let $M^k(G)$ be the family of $k$-cutsets, 
$M^k(G)  = \{  S \in (EG)^{(k)}: G-S \text{ disconnected} \}$.
\item Clearly, $m_k(G) = |M^k(G)|$.
\item When the context is clear, we write $V_i$ and $M^k$ instead of $V_i(G)$ and $M^k(G)$, respectively.
\item If $H$ is a connected subgraph of $G$, $M^k_H(G)$ denotes all the $k$-cutsets  containing $\partial H=\partial VH$ but no edge belonging to $H$, i.e.  
$M^k_H(G) = \{S \in M^k(G): \partial H \subset S \subset EG\setminus EH\}$. 
\item If $H$ has only one vertex $v$ we write $M^k_v(G)$. 
\item Similarly, if $H$ has only two vertices $v$ and $w$ we will write $M^k_{vw}(G)$. 
\item For each $k \in \{1,\ldots,m\}$ and $i\leq k$
define the function $g_k$ as
$$
g_k(i) = \binom{\|G\|-i}{k-i}.$$
\end{itemize}

\begin{lem}\label{cut_bounds} If $G$ is a graph and $A$ is a subset of of $VG$ then
$$
|\partial A| = \sum_{v\in A} \gr{v} - 2\|[A]\|.
$$
Furthermore, if $G$ is 4-regular with girth $g$ and $S$ is a $k$-cutset such that $\partial A \subset S$ and that $|A|< g$, then $|A| \leq (k-2)/2$. If $|A|=g$ then $k\geq 2g$.
\end{lem}
\begin{proof}
The first part of the statement follows from the handshaking lemma and the fact that each edge in $[A]$ counts twice. Now, if $G$ is 4-regular then
$
|\partial A| = 4|A| - 2\|[A]\|.
$
If $|A|< g$ then $\|A\| \leq |A|-1$, so
 $|\partial A| \geq  4|A|-2(|A|-1)$ and $|A|\leq (|\partial A|-2)/2$. 
Since   $\partial A \subset S $, then $|\partial A| \leq k$ and the result follows. The proof for the case where $|A|=g$ is analogous.
\end{proof}

\begin{lem}\label{monotonia}
The function $g_k(i)$ is decreasing, i.e., 
$g_k( i ) \leq g_k( i-1)$ whenever $i\leq k$. 
\end{lem}
\begin{proof}
Recall that the identity $\binom{n}{i}=\frac{n}{i}\binom{n-1}{i-1}$ holds for any pair of positive integers $n$ and $i$. Then:
\begin{equation*}
g_{k}(i-1)= \binom{\|G\|-i+1}{k-i+1} = \frac{\|G\|-i+1}{k-i+1}g_k(i).
\end{equation*}
\end{proof}

\begin{prop}\label{equalities}
Consider a graph $G=(V,E)$.
\begin{itemize}
\item  If $v \in VG$ and $uv \in EG$, then 
 $\partial v \in M^{\gr{v}}$  and $\partial {uv} \in M^{\gr{v}+\gr{v}-1}$. Furthermore,  
\begin{equation}\label{contar}
|M^k_v| = \binom{\|G\| - \gr{v}}{k - \gr{v}} = g_k(d_v),\quad
\text{and }\quad  |M^k_{uv}| = \binom{\|G\| - \gr{u}-\gr{v}+1}{k  - \gr{u}-\gr{v}+2}.
\end{equation}
\item Given a subset $S$ of $VG$,  
 \begin{equation}\label{bounds}
\left|\bigcap_{v \in S } M^k_v \right| =
			g_k\left( d_{v_1}+\cdots+d_{v_i}-\|\induced{ S}\| \right) 
			\qquad \text{ where } S =\{v_1,\dots,v_i\}.
			\end{equation}
\item In particular, if $u,v \in VG$ then 
\begin{equation}\label{MkucapMkv}
		|M^k_u \cap M^k_v| = 
		\begin{cases}
		     g_k( \gr{u}+\gr{v}) &  uv \not\in E,\\
		     g_k( \gr{u}+\gr{v}-1) &  uv \in E.		     
		\end{cases}
\end{equation}
\end{itemize}
\end{prop}
\begin{proof}
The set $M^k_v$ includes precisely $d_v$ edges incident to the fixed node 
$v$ and $k-d_v$ additional edges. There are $g_k(d_v)$ ways to choose those edges, and the first equality for $|M_{v}^{k}|$ follows. The set $M_{uv}^{k}$ 
includes precisely $d_u+d_v-1$ edges incident to the fixed edge $uv$ 
and $k-d_u-d_v+2$ additional edges since $uv$ cannot be chosen, thus proving Expression~\eqref{contar}. 
Expression~\eqref{bounds} is proved analogously, and~\eqref{MkucapMkv} is a particular case of~\eqref{bounds}. 
\end{proof}

For each graph $G=(V,E)$ and each subset $A$ of $VG$, we define the functions $\overline  g_k(A)$ and $\underline{g_k}(A)$  as follows,
$$
\overline g_k (A) = 
\sum_{v \in A} g_k(\gr{v}) 
                     - \sum_{\{u,v\} \in A^{(2)}} g_k( \gr{u}+\gr{v}-\|[\{u,v\}]\|)+
\cdots +(-1)^i \sum_{S \in A^{(i)}} \!\!\!g_k\left( \sum_{v \in S}\gr{v}-\|[S]\|\right) 
                     +\cdots.
$$

$$
\underline{g_k} (A) = 
\sum_{v \in A} g_k(\gr{v}) 
                     - \sum_{\{u,v\} \in A^{(2)}} g_k( \gr{u}+\gr{v}-1)+
\cdots +(-1)^i \sum_{S \in A^{(i)}} \!\!\!g_k\left( \sum_{v \in S}\gr{v}-\frac{(-1)^i+1}2|S^{(2)}|\right)                      +\cdots.
$$
Observe that $\|[\{u,v\}]\|$ equals $1$ if and only if $uv$ is an edge and $0$ otherwise.
\begin{lem}\label{incl-excl-prop}
For any subset $A \subset VG$ and any integer $k$ such that $k\in \{0,\ldots,EG\}$:
 \begin{equation}\label{incl-excl}
 \left|\bigcup_{v \in A} M^k_v \right| 
         =  \sum_{v \in A} |M^k_v| 
                     - \sum_{u,v \in A, u\neq v} |M^k_u\cap M^k_v| + 
                      \sum_{\{u,v,w\} \in A^{(3)}} |M^k_u\cap M^k_v\cap M^k_w| -\cdots 
=  \overline g_k(A).              
\end{equation}
and  the  second summation  is 
\begin{equation}\label{*}
t = \sum_{\{u,v\} \in A^{(2)}} |M^k_u\cap M^k_v|  
= \sum_{uv \in E[A] } g_k( \gr{u}+\gr{v}-1) 
    + \sum_{\{u,v\} \in A^{(2)}\setminus E[A]} g_k( \gr{u}+\gr{v}).
\end{equation}
\end{lem}
\begin{proof}
Combine the Inclusion-Exclusion principle with \eqref{bounds} and the definition of $\overline g_k$.
\end{proof}

We are in position to prove some useful inequalities. 

\begin{lem}\label{sharpest_bound}
Let $G=(V,E)$ be a graph. If $A\subset VG$ then $m_k(G) \geq \overline g_k(A)$. 
\end{lem}
\begin{proof}
It is clear that $m_k(G)=|M^k(G)|$. Additionally, by definition,  $\cup_{v\in A}M^k_v(G) \subseteq M^k(G)$, and consequently, 
$|\cup_{v\in A}M^k_v(G)| \leq m_k(G)$. Now, the result follows from Expression~\eqref{incl-excl}.
\end{proof}

\begin{lem}\label{rude_bound}
Let $G=(V,E)$ be a graph. If $A\subseteq VG$ then $m_k(G) \geq \underline{g_k}(A)$.
\end{lem}
\begin{proof}
Combining Expression~\eqref{bounds} with the inclusion $ E[S] \subseteq S^{(2)}$ for any subset $S$ of $A$, we get that
\begin{equation}\label{base_bound}
g_k(d_{v,S} ) \leq \left|\bigcap_{v \in S } M^k_v \right| 
\leq  g_k\big(d_{v,S} -|S^{(2)}|\big),
\end{equation}
where $d_{v,S} = \sum_{v \in S}\gr{v}$. Then $\overline g_k(A) \geq  \underline{g_k}(A)$, and the statement follows by Lemma~\ref{sharpest_bound}.
\end{proof}

The following assertion constructs sharper bounds for the number of 
edge-cuts $m_k(G)$ of a graph $G$.
%

 \begin{lem}\label{SortedVertexLemma} Let $G=(V,E)$ be a graph. Consider  $A\subseteq	 VG$ and $t =\sum_{\{u,v\} \in A^{(2)}} |M^k_u\cap M^k_v|$. 
Let us sort the $|A^{(2)}|$ numbers $\{g_k(\gr{u}+\gr{v}-1)\}_{\{u,v\}\in A^{(2)}}$ increasingly and let $t'$ be its global sum. Among all those numbers consider the list all of the greatest  $h=|A^{(2)}|- \frac12\sum_{v \in A} \gr{v}$ of them, and replace those terms belonging to $t'$ by 
$g_k( \gr{u}+\gr{v})$ to obtain a new quantity $t''$. Then $m_k(G) \geq \underline{g_k}(A) +t'-t'' \geq \underline{g_k}(A)$.
 \end{lem}
\begin{proof}\,
  Since $\|[A]\| \leq \frac12\sum_{v \in A} \gr{v} $, then  
$\big|A^{(2)}\setminus E[A]| \geq h=|A^{(2)}|- \frac12\sum_{v \in A} \gr{v} $.
Therefore, there are at least $h$ non adjacent vertices in $A$ and $t$ has at least $h$ terms of the form $g_k( \gr{u}+\gr{v})$. Keeping the $h$ greatest ones, we get that  $t' \geq t'' \geq t$. 
Finally, since $\overline g_k(A) -\underline{g_k}(A) \geq -t+t' \geq -t''+t'$, then 
by Lemma~\ref{sharpest_bound} we conclude that 
$m_k(G) \geq \overline g_k(A) \geq \underline{g_k}(A) +t'-t'' \geq \underline{g_k}(A) $.
\end{proof}
In order to sort the numbers $g_k(\gr{u}+\gr{v}-1)$ we can alternatively sort the  numbers $(\gr{u}+\gr{v})$, since by Lemma~\ref{monotonia} the function $g_k$ is decreasing.

\section{The Bipartite Graphs $K_{3,3}$ and $K_{4,4}$ are UMRGs}\label{contributions}
As a proof-of-concept we show that both $K_{3,3}$ and $K_{4,4}$ are UMRGs. 
We consider essentially the bounding methodology that combines Lemmas~\ref{sharpest_bound}-\ref{SortedVertexLemma} from Section~\ref{methodology}. The proof that $K_{3,3}$ is UMRG is elementary. 
\begin{prop}
The graph $K_{3,3}$ is UMRG in the class of $(6,9)$-graphs. 
\end{prop}
\begin{proof}
Since $K_{3,3}$ has superconnectivity 3 we know that $m_{k}(K_{3,3})=0$ 
when $k\in \{0,1,2\}$. It is clear that all the $(6,9)$-graphs 
$G$ satisfy that $m_k(G)=\binom{9}{k}$ whenever $k\geq 5$. 
Furthermore, by Theorem~\ref{cheng} the graph $K_{3,3}$ is 
$t$-optimal, hence $m_4$ also attains its minimum in the graph $K_{3,3}$. 

Then, it is sufficient to prove that $m_3$ is also minimized in $K_{3,3}$. 
Let $G$ be any $(6,9)$-graph. If $G$ is regular, then it has six trivial 
3-edge-cuts, and $m_3(G)\geq 6=m_3(K_{3,3})$. 
Otherwise, $G$ is nonregular. Since $m=9$, we get that $g_3(2)= \binom{9-2}{3-2}=7$. 
By Theorem~\ref{stronger} we can assume, without loss of generality, that $d_v \geq 2$ for all $v\in VG$. Since $G$ is nonregular and its average degree equals 3, by the handshaking lemma $G$ must have some vertex $v$ 
such that $d_v=2$. Let us apply Lemma~\ref{rude_bound} using $A=\{v\}$. 
Then, 
\begin{equation*}
m_3(G) \geq \underline{g_3}(A) = g_3(gr(u))  = g_3(2) =\binom{9-2}{3-2}=\binom{7}{1} = 7 > m_3(K_{3,3}). 
\end{equation*}
We conclude that $m_k(K_{3,3}) \leq m_k(G)$ for all $G$ in $(6,9)$ and all 
$k \in \{0,\ldots,9\}$. In particular, $K_{3,3}$ is UMRG, as we wanted to prove.
\end{proof}

We will prove that $K_{4,4}$ is the strongest in its class; in particular, 
$K_{4,4}$ is UMRG. Our proof strategy can be summarized in three steps. 
First, it is straightforward to prove that $K_{4,4}$ is min-$m_k$ for all $k$ except for $k\in \{6,7,8\}$; see Lemma~\ref{elemental}. Subsequent analysis considers $k\in \{6,7,8\}$ separately. Then, we use the fact that $K_{4,4}$ is triangle-free to prove that it is the most reliable among all the 4-regular $(8,16)$-graphs. Finally, the comparison with nonregular graphs considers repeated application of Lemmas~\ref{rude_bound} and \ref{SortedVertexLemma} as well as special classifications of degree-sequences. 

\begin{lem}\label{elemental}
The graph $K_{4,4}$ is min-$m_k$, for all $k\in \{0,\ldots,5\} \cup \{9\}$
\end{lem}
\begin{proof}
Since $K_{4,4}$ is a regular complete bipartite graph, Theorem~\ref{cheng} asserts that $K_{4,4}$ is $t$-optimal, hence it is min-$m_9$. Furthermore, $K_{4,4}$ has superconnectivity $4$, and consequently, $K_{4,4}$ is min-$m_k$ for all $k\in \{0,\ldots,4\}$. Finally, observe that by Equation \eqref{MkucapMkv},  for any graph $H$ it holds that 
\begin{equation}\label{**}
M^5_v(H) \cap M^5_w(H) =\emptyset \text{ if } v\neq w \text{ with } \gr{v}+\gr{w}\geq 7.
\end{equation}
Let $H$ be a 4-regular $(8,16)$-graph. If we set $A= V(H)$ then by  Lemma~\ref{sharpest_bound}
$$
m_5(H) \geq \overline g_5(A) =8 g_5(4) = 96.
$$
Since  $K_{4,4}$ has girth 4, by Lemma~\ref{cut_bounds}, it has only trivial 5-cuts, i.e. $ M^5 \subset \cup_v M^5_v$.
Thus  $m_5(K_{4,4}) = |M_5| \leq |\cup_v M^5_v|= \sum_v |M^5_v| =8 g_5(4)=96 $, and $m_5(K_{4,4})\leq m_5(H)$, for any  4-regular  $H$.

 If $H$ is not 4-regular, by Theorem~\ref{stronger} we can assume  that $\delta(H)\in \{2,3\}$. If $\delta(H)=2$ and $\gr{v}=2$ then by Lemma~\ref{rude_bound} setting $A = \{v\}$ we have $m_5 \geq \underline{g_5}(A) = g_5(2)= 364 > 96$.
If $\delta(H)=3$ then we subdivide the discussion into two disjoint and exhaustive cases. If there are at least two nodes 
$v$ and $w$ with degree 3 then by Lemma~\ref{rude_bound} setting $A = \{v, w\}$ we have that $m_5 \geq \underline{g_5}(A) = 2g_5(3)-g_5(3+3-1)= 155 > 96$.  Otherwise, there is precisely one node $v$ with minimum degree 3. 
By the handshaking lemma, the graph $H$ must have six nodes with degree 4 and one with degree 5, hence if $\gr{w}=\gr{u}=4$ and $A=\{u,v,w\}$, by Lemma~\ref{sharpest_bound}  and \eqref{**} we get that $m_5(H) \geq \overline g_5(A)  =g_5(3)+2g_5(4)  = 102  > 96$.
\end{proof}

In the following sections we will compare $K_{4,4}$ versus regular and 
nonregular graphs in its class $(8,16)$, respectively.

\subsection{Regular Graphs}
In this section we find lower bounds on the  coefficient $m_k$ for any regular $(8,16)$-graph. The bounds are precisely $m_k(K_{4,4})$. The 
key is Mantel theorem~\cite{1907-Mantel}, which asserts that 
the maximum size of an $n$-vertex triangle-free graph $G$ is $\lfloor n^2/4 \rfloor$, and the bound is attained if and only if $G=K_{\lfloor n/2\rfloor, \lceil n/2\rceil}$.

\begin{lem}\label{regular}
If $G$ is a regular $(8,16)$-graph, then
\begin{itemize}
\item $m_6(G) \geq 8\binom{16-4}{2}+16 = 544$.
\item $m_7(G) \geq 8\binom{16-4}{3}-16+16\binom{16-7}{1} = 1888$.
\item $m_8(G) \geq 8\binom{16-4}{4}-\left[\binom{8}{2}-16\right]-16(16-7) +16\binom{16-7}{2}+8\binom{4}{2}+3\tau(G) \binom{16-8}{2}+s(G)/2$, 
\end{itemize}
where $s(G)$ is the number of squares of $G$, $\tau(G)=1$ if and only if $G$ has at least one triangle, or $0$ otherwise. Furthermore, the equalities hold if and only if $G$ is a triangle-free graph.
\end{lem}
\begin{proof}
Any disconnected graph of order $n$ will have a connected component of  at most $\lfloor n/2 \rfloor$ vertices. If $G$ is a $(8,16)$-graph and  $S \in M^k(G)$ then $G\setminus S$ will have a connected component  $H$ of cardinality at most $4$. Denote $C^i(G) $ the set of connected subgraphs of order $i$ in $G\setminus S$. Then,  
\begin{equation*}
M^k(G) = \bigcup_{i=1}^4 \bigcup_{H \in C^i(G): |\partial H|\leq k} 
M^k_{H}(G).
\end{equation*}
Let $G$ be a 4-regular $(8,16)$-graph. Consider some $H \in C^i(G)$ 
with $i\leq 4$ and $\partial H \leq 8$. There are 4 possibilities:
\begin{itemize}
\item  $H\in C^1$ and $|\partial H|$ is $4$, for which there are precisely $|G|$ possible $H$'s.
\item  $H\in C^2$ and $|\partial H|$ is $6$, for which there are precisely $\|G\|$ possible $H$'s.
\item  $H\in C^3$ and $|\partial H|$ is $8$ or $6$ depending on $[H]$ being a 3-path or a triangle. In the former, there are precisely $\binom{4}{2}|G|=6|G|$  possibilities, while in the latter the number of possibilities is a function  of the number of triangles, that is zero if and only 
if $G= K_{4,4}$.
\item $H \in C^4$ and $|\partial H|$ is  8, 6 or 4. If $|\partial H| = 8$  then $H$ is either a square or a triangle with a pendant vertex. If $|\partial H|$ is $6$ or $4$ then $H$ is either a complete graph minus an edge or the complete graph, respectively. 

\end{itemize}
Therefore,  we have the following inclusions, where   $M^k_{i} = \cup_{H\in C^i} M^k_{H}$ and $M^k_{i,j} = \cup_{H\in C^i \land |\partial H|=j} M^k_H$: 
\begin{align}
M^6 &\supseteq M^6_1 \cup M^6_2,\label{M6}\\
M^7 &\supseteq M^7_1 \cup M^7_2,\label{M7}\\
M^8 &\supseteq M^8_1 \cup M^8_2  \cup M^8_{3,8}\cup M^8_{3,6} \cup M^8_{4,8}.\label{M8}
\end{align}
These inclusions are equalities in triangle-free graphs. 
Let us consider the three cases separately:
\begin{itemize}
\item Note that $M^6_1 = \{\partial v\cup\{e,f\}: v\in V, e,f \in E\setminus \partial v\}$, $M^6_2 = \{\partial e: e \in E\}$ and 
$M_1^6\cap M_2^6 = M^6_u\cap M^6_v = M^6_e\cap M^6_f= \emptyset$, for any 
pair of vertices $u \neq v$ and edges $e \neq f$. 
The lower bound for $m_6$ then follows from the Expression \eqref{M6}.

\item Note that 
$M^7_1 = \{\partial v \cup S: v\in V, S \in (E\setminus \partial v)^{(3)}\}$, 
$M^7_2 = \{\partial e\cup\{f\}: e \in E, f\not\in \partial e \cup \{e\}\}$ and 
both $M_1^7\cap M_2^7 = M^6_e\cap M^6_f = \emptyset$ for any 
pair of edges $e\neq f$, while $|M^7_u\cap M^7_v|=1$ if  $uv\in E$, or $0$ otherwise. The lower bound for $m_7$ then follows from the Expression \eqref{M7}.
\item The lower bound for $m_8$ follows from the Expression \eqref{M8}, and  it is more involved. Clearly, 
\begin{align*}
M^8_1 &= \{\partial v\cup S: v\in V \land S \in (E \setminus \partial v)^{(4)}\}, \\
M^8_2 &= \{\partial e\cup\{f, g \}: e \in E\land f, g \in E \setminus \partial e\},\\
M^8_{3,8} &= \{\partial\{u, v, w\}: uv, vw \in E\land uw \not\in E\},\\
M^8_{3,6} &= \{\partial T\cup\{e,f\}: [T] \simeq C_3\land e,f \not\in \partial T \land |\{e,f\}\cap E[T]|\leq 1\},\\
M^8_{4,8} &= \{\partial S: [S] \simeq C_4 \}.
\end{align*}
These  sets are pairwise disjoint, so 
$$
|M^8| \geq  |M^8_1|+| M^8_2|+| M^8_{3,8}|+| M^8_{3,6}|+| M^8_{4,8}|.
$$
Let us bound each term on the right-hand side. 
For $| M^8_{4,8}|$, if $S$ is a square then in a regular $(8,16)$-graph  the complement of the square has four edges as well. That complement is either a square or a triangle with a pendant  edge. Further, if $G$ 
is triangle-free then $S^c$ must be a square. So, for each pair of complement squares there is only one set in $M^8_{4,8}$. If $G$ has 
some triangle then each set in $M^8_{4,8}$ corresponds to one or two squares; in both cases the cardinality is at least  $s/2$.

For $M^8_{3,6} $ we just bound the cardinality by considering  only one triangle $T$ and for each edge $g$ in $T$ we choose the edges $e$ and $f$ among those edges not in $\partial T\cup ET \setminus \{g\}$. Since there are three ways to choose $g$, we have $3 \binom{|E|-|\partial T\cup ET \setminus \{g\}|}{2} =
3 \binom{16-(6+2) }{2} $ possible choices for the edges $e$ and $f$.

The cardinality $|M^8_{3,8}|$ corresponds to $|G|$ choices for $v$ and 
$\binom{4}{2}$ choices for $u$ and $w$ among the neighbours of $v$. Similarly, $|M^8_2|$ corresponds to $\|G\|$ choices for $vw$ and $\binom{\|G\|-|\partial {vw}\cup\{vw\}|}{2}$ choices for the edges edges $e$ and $f$ 
that are not incident to $v$ or $w$.

Finally, to bound $|M^8_1|$ note that $\partial v\cup S= \partial {v'}\cup S'$ if and only if either $vv' \not\in E$ with $\partial v = S' $ and $\partial {v'}=S $ or 
 $vv' \in E$ and  $ \partial v \cup S = \partial {vv'} \cup \{e\}$ with $e \in EG \setminus \partial {vv'}$. 
 There are $\|\bar G\|=\binom{8}2-16$ possible choices for $vv' \not\in E$ and $\|G\|\binom{\|G\|-|\partial {vv'}|}{1}=16(16-7)$ possible choices for $vv' \in E$ and  $ \partial v \cup S = \partial {vv'} \cup \{e\}$.
\end{itemize}
\end{proof}

\begin{prop}\label{optregular}
$K_{4,4}$ is the stronger graph among all the regular $(8,16)$-graphs. 
Furthermore, $m_6(K_{4,4})=544$, $m_7(K_{4,4})=1888$ and $m_8(K_{4,4})=4446$. 
\end{prop}
\begin{proof}
The first part of the statement follows combining Mantel Theorem with Lemmas~\ref{elemental}~and~Lemma~\ref{regular}. To prove the second part of the statement use Lemma~\ref{regular} with $\tau(K_{4,4})=0$ and  $c(K_{4,4})=\binom{4}2^2$. 
\end{proof}

\subsection{Nonregular Graphs}

We will extensively use Lemmas~\ref{sharpest_bound}, \ref{rude_bound} and \ref{SortedVertexLemma}. 
The following results consider the first $h$ nodes sorted by increasing degree. Let us  call $ V^hG$ to this set, i.e., if $VG = \{v_1,\ldots, v_n\}$ with $\gr{v_1}\leq \cdots \leq \gr{v_n}$ then   $ V^hG=\{v_1,v_2,\ldots, v_h\}$. This order is not unique, but the following proofs do not depend on the specific choice.

\begin{lem}\label{preparatory1}
Among the class of $(8,16)$-graphs $G$  the coefficients $m_6$ and $m_7$ are minimized in $K_{4,4}$.
\end{lem}
\begin{proof}
Let $G$ be an $(8,16)$--graph.
If  $\delta(G)=2$, by Lemma~\ref{rude_bound}, with $A = V^1G=\{u\}$, i.e. $\gr{u}=2$, we have $ m_k(G) \geq  g_k( 2) \geq m_k(K_{4,4}) $ for $k \in \{6, 7\}$, 
 see Table~\ref{Table1}. 
 Otherwise $\delta(G)=3$ and we  discuss according to $|V_3|$, i.e., 
the number of nodes with degree $3$ in $G$:
\begin{itemize}
\item If  $|V_3| \geq 3$ then  by Lemma~\ref{rude_bound}  with  $A=V^3G$:   
$ m_k(G) \geq 3g_k(3)-3g_k(3+3-1)  = 825$, $1980$,
for $k \in \{6,7\}$, respectively, in both cases greater than the corresponding $m_k (K_{4,4})$.
\item If  $|V_3| = 2$ then $|V_4| \geq 4$. Using Lemma~\ref{rude_bound} with 
$A =V^5G$, we have
\begin{align*}
m_k(G) &\geq 2g_k( 3) + 4g_k( 4) - g_k( 3+3-1) - 6g_k( 3+4-1) - 3g_k( 4+4-1). 
\end{align*}
Then $m_6(G) \geq 753> m_6(K_{4,4})$ and $m_7(G)\geq 1972>m_7(K_{4,4})$. 
\item If $|V_3|=1$ then the  degree-sequence is $(3, 4, 4, 4, 4, 4, 4, 5)$. 
We  apply Lemma~\ref{rude_bound} with  $A =  V^7G$, to obtain 
\begin{align*}
m_k(G) &\geq g_k( 3)+6 g_k( 4) - 6 g_k( 3+4-1) - {6 \choose 2} g_k( 4+4-1).   
\end{align*}
Then $m_6(G) \geq 676> m_6(K_{4,4})$ and $m_7(G)\geq 1960>m_7(K_{4,4})$. 
\end{itemize} 
\end{proof}
 
 \begin{table}[h]
\caption{Values of $g_k( i)$  and $m_k(K_{4,4})$ for a $(8,16)$-graph $G$ and  $k=6,7,8$ .\label{Table1}}
\begin{tabular}{lrrrrrrc}
  \headrow \thead{$k$}   & \thead{$g_k( 1)$}  &  \thead{$g_k( 2)$}   &  \thead{$g_k( 3)$} &  
  \thead{$g_k( 4)$}   &  \thead{$g_k( 5)$} & 
  \thead{$g_k( 6)$} & \thead{$m_k(K_{4,4})$}\\ 
$6$   &$3003$ &  1001     &    364     & 66 & 11      &    1  &  544    \\ 
$7$   & $5005$ &  2002     &   715    & 220 & 55      &    10   &  1888   \\ 
$8$   &$6435$ &  3003     &   1287      & 495 & 165     &  45   &  4446  \\\hline 
\end{tabular}
\end{table}
 
\begin{lem}\label{preparatory2}
Among the class of $(8,16)$-graphs $G$  the coefficient  $m_8$ is minimized in $K_{4,4}$, i.e. $m_8(G) \geq \alpha = 4446  = m_8(K_{4,4})$.
\end{lem}
\begin{proof}
Let $G$ be an $(8,16)$--graph. We split the proof into two parts according to the minimum degree of $G$. Let us  write  $g(x)$ instead of $g_8(x)$. 
\begin{itemize}
\item  If $\delta(G) = 2$, we will consider three cases as a function of $|V_2(G)|$ and  $|V_3(G)|$. 
\begin{itemize}
\item If  $|V_2(G)| \geq 2$, by  Lemma~\ref{rude_bound} with $A=V^2G $, i.e. $A=\{u,v\} \subset V_2(G)$, we have, 
$$
m_8(G) \geq 
2 g(2) - g( 2+2-1) = 2\times 3003 - 1287 > \alpha.
$$
\item If $|V_2(G)| = 1$ and $|V_3| \geq 2$,  thus by  Lemma~\ref{sharpest_bound}  with 
 $A = V^3G$, we discuss according to the set $E'= E\induced{A}$ of edges of the subgraph induced by $A=\{u,v,w\}$ with $\gr{u}=2$ and $\gr{v}=\gr{w}=3$:
\begin{align*}
&m_8(G) \geq  g(2)+2g(3)\\&\quad+ \begin{cases}
 - 2g( 2+3 )- g(3+3) +g(2+3+3)		& E' = \emptyset, \\
 - 2g( 2+3 )- g(3+3-1) +g(2+3+3-1)		&E' = \{vw\},\\
 - g( 2+3-1 )- g( 2+3)- g(3+3) +g(2+3+3-1)& E'= \{uv\} \text{ or } \{uw\},\\
 -g( 2+3-1 )-g( 2+3) - g(3+3-1 )- g(2+3+3-2)	& E'= \{uv, vw\} \text{ or } \{uw, vw\}\\
 - 2g(2+3-1)- g(3+3) +g(2+3+3-2)	& E'=\{uv,uw\},\\
 - 2g( 4)- g(3+3-1) +g(2+3+3-3)			& E'= \{uv,uw,vw\}. \\
\end{cases} 
\end{align*}
which is greater than $ 4587 > \alpha$ in the six cases.\\
\item If $|V_2(G)| = 1$ and  $|V_3| \leq 1$  we consider all possible degree sequences, which are: 
 $ (2, 3, 4, 4, 4, 4, 4, 7), $ $ (2, 3, 4, 4, 4, 4, 5, 6), $ $ (2, 3, 4, 4, 4, 5, 5, 5), $ $ (2, 4, 4, 4, 4, 4, 4, 6), $ $ (2, 4, 4, 4, 4, 4, 5, 5)$, and apply  Lemma~\ref{rude_bound} with $A =  V^5G$. For the first three sequences with the prefix $(2,3,4,4,4, \ldots)$,  we have
\begin{align*}
 m_8(G) \geq   g(2)+&g(3) + 3g(4)- g( 2+3-1 )- 3g(2+4-1) -3g(3+4-1) \\
				& -3g(4+4-1) + 3g(2+3+4)  + 3g(3+4+4)+g(4+4+4)  \\
				& - 3g(2+3+4+4-C^4_2) - g(3+4+4+4-C^4_2)\\
				& - g(2+4+4+4-C^4_2) + g(2+3+4+4+4) \geq 4595 > \alpha.
\end{align*}
The terms in the second and last line are  null, because the arguments of $g$ are all greater than 8. Finally, for the last two sequences with the prefix $(2,4,4,4,4, \ldots)$,  we apply Lemma~\ref{SortedVertexLemma}  to the same $A$ to obtain
\begin{align*}
 m_8(G) &\geq  g(2)+4g(4) -  2g(2+4-1)- 2g(2+4) - 10g(4+4-1)\\
  &- 4 g(2+3\times4-\text{\small${4 \choose 2}$})= 4505> \alpha,
\end{align*}
where the null terms such as $g(2+3+4)$ were not written.
\end{itemize}  

\item If $\delta(G) = 3$ we discuss according to  $|V_3|$ and $|V_4|$:
\begin{itemize}
\item If $|V_3|\geq 5$ then, using Lemma~\ref{rude_bound} with $A=V^5G$, 
\begin{align*}
m_8(G) &\geq 5g(3)-10g(3+3-1) - 5g(3+3+3+3-6) = 4560 > \alpha.
\end{align*}
\item If $|V_3|= 4$ and $|V_4|\geq 1 $, note that 
$\|\induced{V_3}\|\leq 4$ since $G$ is biconnected. Using Lemma~\ref{sharpest_bound} with  $A=V^5G $, 
\begin{align*}
m_8(G) &\geq 4g(3)+g(4) - 4g(3+3-1)-2g(3+3) - 4g(3+4-1)\\
&- g(3+3+3+3-4)-4g(3+3+3+4-6) = 4676 > \alpha.
\end{align*}
\item If $|V_3|= 4$ and $|V_4|=0 $ then the degree sequence is $(3, 3, 3, 3, 5, 5, 5, 5)$. By Lemma~\ref{sharpest_bound} with $A=V^5G $ and the previous observation about the biconnectivity of $G$, 
\begin{align*}
m_8(G) &\geq 4g(3)+g(5) - 4g(3+3-1)-2g(3+3) - 4g(3+5-1)\\
&-g(3+3+3+3-4)-4g(3+3+3+5-6) = 4514 > \alpha.
\end{align*}
\item If $|V_3|=3$ and $|V_4|\geq 3$, if $A=V^6G $ and  $h=\|\induced{V_3}\|$, 
then $h \in \{0,1,2,3\}$ and  by Lemma~\ref{sharpest_bound} we have
\begin{align*}
m_8(G) &\geq 3g(3)+3g(4) -hg(3+3-1) -(3-h)g(3+3)-(9-2h)g(3+4-1)\\
&-2h g(3+4)-3g(4+4-1) -3g(3+3+3+4-6)\\
&- 9g(3+3+4+4-6)-g(3+3+3+4+4+4-8) \geq 4599 > \alpha.
\end{align*}

The remaining possible degree-sequences are:
$$(3, 3, 3, 4, 4, 5, 5, 5), (3, 3, 4, 4, 4, 4, 4, 6), (3, 3, 4, 4, 4, 4, 5, 5), 
(3, 4, 4, 4, 4, 4, 4, 5).$$

If the degree-sequence is $ (3, 3, 3, 4, 4, 5, 5, 5)$, by Lemma~\ref{rude_bound} with $A=V$,
\begin{align*}
m_8(G) &\geq 3g(3)+2g(4)+3g(5)- 3g(3+3-1)-g(4+4-1)\\
&-3g(5+5-1)-6g(3+4-1)-9g(3+5-1)-6g(4+5-1)\\
&-2g(3+3+3+4-6)-3g(3+3+3+5-6)-3g(3+3+4+4-6)\\
&= 4461 > \alpha.
\end{align*}

If the degree-sequences is $(3, 3, 4, 4, 4, 4, 4, 6)$ with $V_3 = \{u,v\}$, then 
the maximum number of edges between $V_3$ and $V_4$ is $4$ if  $uv \in EG$, and  $6$ otherwise.
Besides, since $|V_4|=5$ then $\|\induced{V_4}\|\leq 8$ in order to $G$ to be biconnected. Then,  by Lemma~\ref{sharpest_bound} with  $A=V$ and $h=\|\induced{V_3}\| \in \{0,1\}$, we have 
\begin{align*}
m_8(G) &\geq  2g( 3)+5g( 4)+g( 6)-g( 3+3- h)-(6-2h)g(3+4-1)\\
&-(4+2h)g(3+4)-8g( 4+4-1)-2g(3+6-1)-{4 \choose 2}g(3+3+4+4-6).
\end{align*}
The last sum is $4663$ if $h=0$ and $4615$ if $h=1$, both greater than $\alpha$.

\item Finally, if $|V_3|\leq 2$ then, the  only possible degree-sequences,  are $(3, 3, 4, 4, 4, 4, 5, 5)$  or $(3, 4, 4, 4, 4, 4, 4, 4, 5)$, the study is more involved. 
We will lower bound the cardinality of  $M'=\bigcup_{v \in VG} M^8_v \cup \bigcup _{e \in EG} M^8_e$, applying Lemma~\ref{rude_bound} to find a bound on $|\bigcup_{v \in VG} M^8_v|$, and detailed analysis for the remaining terms. By the inclusion-exclusion principle:
\begin{align*}
|M'|
&=\left|\bigcup_{v \in VG} M^8_v\right| + \left|\bigcup _{e \in EG} M^8_e\right|
-\left|\bigcup_{v \in VG} M^8_v \cap \bigcup _{e \in EG} M^8_e\right|\\
&=\left|\bigcup_{v \in VG} M^8_v\right| + \sum_{e \in EG} |M^8_e|
- \sum _{e \in EG} \sum_{v \in VG} |M^8_v \cap M^8_e|,\\
&\geq \left|\bigcup_{v \in VG} M^8_v\right| + \sum_{e \in EG} |M^8_e|
- 18,\\
\end{align*}
where the second equality holds since  $M^8_e \cap M^8_f = \emptyset$ for all $e,f \in E$, while the last inequality holds since 
$M^8_v \cap M^8_e$ is the empty sets unless $\gr{v}=3$,  $e = uw$ with $\gr{u}=\gr{w}=4$ and $v$ is adjacent to $u$ or $w$, which are only 9 cases for $|V_3|=1$ and at most 18 cases for $|V_3|=2$. 
Now, we will first bound the first term on the right-hand side of the last inequality, and then, we will bound the second term.
For the first term, let us consider the  Lemma~\ref{rude_bound} with $A = V$. For the first sequence we have 
\begin{align*}
\left|\bigcup_{v \in VG} M^8_v\right| &\geq 
2g(3)+4g(4)+2g(5)- f(3+3-1)-6g(4+4-1) - 8g(3+4-1)\\
& -4g(3+5-1) -8g(4+5-1)	-{4 \choose 2} g(3+3+4+4-6)= 4255,
\end{align*}
and for the second sequence we have 
\begin{align*}
\left|\bigcup_{v \in VG} M^8_v\right| &\geq
     g(3)+6g(4)+g(5) - 6g(3+4-1) - g(3+5-1) - 6g(4+5-1)\\
      &= 4137.
\end{align*} 
Now, let us consider the sum  $S = \sum_{e \in EG} |M^8_e|$. If $e = uv$, then $|M^8_e| = \binom{16-\gr{u}-\gr{v}+1}{8-\gr{u}-\gr{v}}$, which has the four possible values shown in Table~\ref{Table2}, since there  are   three possible values of the  degrees. Thus, a possible lower bound for that sum $S$ is the minimum of the function $f(a,b,c,d) = 168 a + 36 b + 36 c + 8 d$ subject to the constraints $a+b =5, 6$, $d =9, 10$ and $a+b+c+d = 16$ for the first  sequence and subject to the  constraints $a+b = 3$, $d \leq 5$ and $a+b+c+d = 16$ for the second sequence.

These minimum values are attained at  $(a,b,c,d)=(0,5,0,11)$, and  
$(a,b,c,d)=(0,3,5,8)$, respectively. Therefore, 
$S\geq 268$ and $ 436 $ respectively. All the bounds together give us: 
$m_8(G) \geq 4255+268-18 = 4505$ and,  
$m_8(G) \geq 4162+436-18 = 4580 $, both greater than $\alpha$.
\end{itemize}
\end{itemize}
\end{proof}

\begin{table}
\caption{ Values of $|M^8_{uv}| $ according with $\gr{u}$ and $\gr{v}$\label{Table2}}
\begin{tabular}{lcc}
\headrow $\gr{u}$ & $\gr{v}$ &  $|M^8_{uv}|$\\ 
 3     &   4  & 168       \\ 
 3     &   5   & 36    \\  
 4     &   4    & 36   \\ 
 4     &   5    & 8  \\\hline 
\end{tabular}
\end{table}

\begin{prop}\label{optnoregular}
Among the class of nonregular $(8,16)$-graphs, $K_{4,4}$ is min-$m_k$ for all $k \in
\{0,\ldots,16\}$.
\end{prop}
\begin{proof}
By Lemmas~\ref{elemental}, \ref{preparatory1}~and~\ref{preparatory2} we know 
that $K_{4,4}$ is min-$m_k$ for each $k$ in $\{0,\ldots,9\}$. Now, set $k \in \{10,\ldots,16\}$. Observe that if we remove $k$ edges to any $(8,16)$-graph then 
the resulting subgraph is not connected. Therefore, $m_{k}(K_{4,4})=m_k(G)=\binom{16}{k}$ for each $k \in \{10,\ldots,16\}$. 
\end{proof}

\begin{teo}\label{Wag}
The complete bipartite graph $K_{4,4}$ is the strongest in its class. 
In particular, $K_{4,4}$ is uniformly most reliable in the class of $(8,16)$-graphs. 
\end{teo}
\begin{proof}
Combining Propositions~\ref{optregular}~and~\ref{optnoregular}, we conclude that 
the complete bipartite graph $K_{4,4}$ is the strongest in its class. In particular, any strongest graph in its class is uniformly most reliable, and the result follows.
\end{proof}

\section{Conclusions and Trends for Future Work}
\label{conclusions}
Uniformly most reliable graphs (UMRGs) represent a synthesis in network reliability analysis. Finding them is a hard 
task not well understood. An exhaustive comparison is computationally prohibitive for most cases. 
Prior works in the field try to globally minimize the cutsets. This minimization could be enriched with our bounding methodology. As a proof-of-concept, here we 
show that both $K_{3,3}$ and $K_{4,4}$ are UMRGs. There are several trends for future work. Even though  all UMRGs share special properties such as the greatest girth, there are no proofs available for these conjectures. 
A powerful methodology to find UMRGs is not known.  In this paper we propose a novel methodology to count trivial cutsets, which could be used for the discovery of UMRGs. As future work, we want to study if the complete bipartite graphs are always UMRGs.

\bibliography{biblio}

\end{document}